\newtheorem{thm}{Theorem}[section]
\newtheorem{prop}[thm]{Proposition}
\newtheorem{cor}[thm]{Corollary}
\theoremstyle{definition}
\newtheorem{defn}[thm]{Definition}
\newtheorem{rem}[thm]{Remark}
\DeclareMathAlphabet{\mathbbb}{U}{bbold}{m}{n}
\DeclareMathAlphabet{\mathmanual}{U}{manfnt}{m}{n}
\newcommand{\calA}{\mathcal{A}} 
 \newcommand{\I}{\mathbbb{1}}
\newcommand{\Sim}{\Sigma}
\newcommand{\Hom}{\mathrm{Hom}}
\newcommand{\End}{\mathop{\mathrm{End}}}
\newcommand{\Q}{\mathbb{Q}}
\newcommand{\tens}{\otimes}
\def\c#1{\mathop{ {\mathcal #1}  }\nolimits}
\def\ch{\mathop{\mathcal Ch}\nolimits}
\def\cch{\mathop{\mathcal CCh}\nolimits}
\def\ch-{\mathop{{\mathcal Ch}_-}\nolimits}
\def\cch-{\mathop{{\mathcal CCh}_-}\nolimits}
\def\ch+{\mathop{{\mathcal Ch}_+}\nolimits}
\def\cch+{\mathop{{\mathcal CCh}_+}\nolimits}
\def\Tr{{\mathsf{Tr} }}
\def\cp{{\mathsf{cp} }}
\def\kim{{\mathrm{kim}}}
\let\sem=\bf
\def\ZZ{{\mathbb Z}}
\def\QQ{{\mathbb Q}}
\def\fraz#1 #2 {\frac{#1}{#2}}
\def\h#1{{\kern .07em}^h\kern-.04em{#1}}
\def\a#1{{\kern .07em}{#1}_a}
\let\sem=\bf
\def\To{\longrightarrow}
\numberwithin{equation}{thm}
\begin{document}
\title[Schur-finiteness and endomorphisms universally...]{Schur-finiteness and endomorphisms universally of trace zero
via certain trace relations}

\date{October 2nd, 2007}

\begin{abstract}
We provide a sufficient condition that ensures the nilpotency 
of endomorphisms universally of trace zero of Schur-finite objects
in a category of homological type, i.e., a $\Q$-linear $\tens$-category
with a tensor functor to super vector spaces. 
This generalizes previous results about finite-dimensional objects,
in particular by Kimura in the category of motives.
We also present some facts which suggest that this might be the best
generalization possible of this line of proof.
To get the result we prove an identity
of trace relations on super vector spaces which
  has an independent interest in the field of combinatorics. Our main tool is Berele-Regev's
  theory of Hook Schur functions. We use their generalization of the
  classic Schur-Weyl duality
to the ``super'' case, together with their 
factorization formula.
\end{abstract}

\author{Alessio Del Padrone}
\email{delpadro@dima.unige.it}

\author{Carlo Mazza}
\email{mazza@dima.unige.it}
\maketitle


%







\section{Introduction}

Let $\calA$ be a $\Q$-linear tensor category 
in which idempotents split equipped with a 
functor $H$ to super vector spaces. The endomorphisms
universally of trace zero of an object are the endomorphisms whose
compositions with any other endomorphisms have all trace zero.
In the case $\calA$ is a category of motives, the endomorphisms
$\mathcal{N}(A)$ universally of trace zero of an object $A$ are a
subset of the numerically trivial ones. According to a result by
Kimura in \cite{KiFD}, if an object $A$ is finite-dimensional, then
every numerically trivial endomorphism of $A$ is nilpotent. A still
open question is whether the same result holds for Schur-finite
objects (see \cite{DeMaSFN}).

Finiteness conditions for motives are related to part of the standard
conjectures: in particular, if $\calA$ is the category of Chow
motives, then the finiteness of the motive of a surface with $p_g=0$
is equivalent to Bloch's Conjecture (see \cite[Theorem 7]{GuPeFD}). In
this paper we show (Theorem \ref{ppp}) that if $A$ has the sign
property (Definition \ref{sign}) and $S_\lambda(A)=0$, where $\lambda$
is a partition which is not too big, i.e., it does not contain the
rectangle with $a+2$ rows and $b+2$ columns, $a$ and $b$ being the
dimensions of the even and the odd part of $H(A)$, then every
endomorphism in $\mathcal{N}(A)$ is nilpotent.

We start by recalling the definitions
for the different finiteness notions and their main properties.
We then recall the the nilpotency conjecture and how this relates
to the various finiteness notions.
Then the main result is stated and proved (modulo a combinatorial result)
and we dedicate the rest of the section
to analyzing some related facts and reasons why this might be the sharpest 
result possible using this particular line of proof.
The second and last section of the paper will prove the necessary combinatorial facts
which are used in the proof of the main result.


We keep the notation and the terminology of \cite{DeMaSFN},
except that we will write $\Tr$ for the categorical trace in the sense of \cite{JSV}, 
$tr$ for the ordinary trace of matrices, and we will write $V_0|V_1$ ($d_0|d_1$) for (the dimension of) the super vector space having the $d_i$-dimensional vector space $V_i$ in degree $i$. 


Since neither of the authors is an expert in combinatorics, the third
section will be less concise and we now recall some basic facts and
notations: a partition $\lambda$ of $n$ is a sequence of integers
$(\lambda_1,\ldots,\lambda_r)$ such that $\lambda_i\geq \lambda_{i+1}>0$
for all $i=1,\ldots,r-1$ and $\sum_i \lambda_i=n$. We will often
confuse a partition with its associated Young diagram and, e.g.,
we say that the partition $(b^a)$ is the rectangle with $a$
rows and $b$ columns. If $\lambda$ is a partition,
we write $\lambda'=(\lambda_1',\ldots,\lambda_s')$ for the
transposed partition. We say that $(i,j)\in \lambda$ if $\lambda_i\geq
j$. The maximal hook of $\lambda$ is the hook $(\lambda_1,1^{r-1})$;
the maximal skew hook is the set $\{(i,j)\in\lambda$ s.t.
$(i+1,j+1)\not\in\lambda\}$. If $\nu$ is the maximal (skew or not)
hook, then
$\lambda\setminus\nu$ is the partition
$(\lambda_2-1,\ldots,\lambda_r-1)$.  Let $\mu=(\mu_1,\ldots,\mu_s)$
and $\lambda=(\lambda_1,\ldots,\lambda_r)$ be two partitions, we say
that $\mu\subseteq \lambda$ if $s\leq r$ and $\mu_i\leq \lambda_i$ for
all $i=1,\ldots,s$.



\section{Finite-dimensionality and nilpotency}

Let $\mathcal{A}$ be a {\sem pseudo-abelian $\tens$-category},
i.e., a ``$\tens$-cat\'egorie rigide sur $F$''
as in \cite[2.2.2]{AnM}
in which idempotents split. We assume that $F=\End_{\c{A}}(\I)$ and it
contains $\QQ$. 
The partitions $\lambda$ of an integer $n$ give a complete
set of mutually orthogonal central idempotents 
\[ \mathsf{d}_\lambda:=\frac{\dim
  V_\lambda}{n!}\sum_{\sigma\in \Sigma_n} \chi_\lambda(\sigma)\sigma\]
in the group algebra $\Q\Sim_n$ (see \cite{FH}).  We define an
endofunctor on $\c{A}$ by setting $S_\lambda(A)=\mathsf{d}_\lambda(A^{\tens
  n})$. This is a multiple of the classical Schur functor
corresponding to $\lambda$. In particular, we define
$\mathrm{Sym}^n(A)=S_{(n)}(A)$ and $\Lambda^n(A)=S_{(1^n)}(A)$.  The
following definitions are directly inspired by \cite{DeCT02} and
\cite{KiFD} (see \cite{AK}, \cite{GuPeFD}, and \cite{MaSFM} for
further reference).

\begin{defn}
  An object $A$ of $\calA$ is \textbf{Schur-finite} if there is a
  partition $\lambda$ such that $S_\lambda(A)=0$.  If $S_\lambda(A)=0$
  with $\lambda$ of the form $(n)$ (respectively, $\lambda=(1^n)$)
  then $A$ is called \textbf{odd} (respectively, \textbf{even}).  We
  say that $A$ is \textbf{finite-dimensional} (in the sense of
  Kimura-O'Sullivan) if $A=A_+\oplus A_-$ with $A_+$ even and $A_-$
  odd.
\end{defn}

Both finite-dimensionality and Schur-finiteness are stable under
direct sums, tensor products, duals, and taking direct summands.
Hence every finite-dimensional object is Schur-finite, but the converse 
does not hold (see \cite[2.6.5.1]{DePhd}). On the other hand, this weaker
condition is compatible with triangulated structures on the
category while finite-dimensionality is not (see \cite[3.6 and 3.8]{MaSFM}).

One of the most important consequences of finite-dimensionality
is the nilpotency of endomorphisms universally of trace zero.

\begin{defn}
Recall that we have
$F$-linear {\sem trace} maps $\Tr\colon \End_{\c{A}}(A)\To \End_{\c{A}}(\I)$
compatible with $\otimes$-functors. We define
the $F$-submodules of {\sem endomorphisms universally of trace zero} as
\[
\c{N}(A):=\{f\in\Hom_{\c{A}}(A,A)\mid \Tr(f\circ g)=0,\;\; {\rm for\;\;all}\;\;
g\in\Hom_{\c{A}}(A,A)\}.\]
We say an object $A$ is a \textbf{phantom} if $\mathrm{Id}_A\in \c{N}(A)$.
\end{defn}


Andr\'e and Kahn proved in 
\cite[9.1.14]{AK} that if $A$ is a finite-dimensional
object, then any $f\in\mathcal{N}(A)$ is nilpotent.
In particular, if all objects
of $\calA$ are finite-dimensional, then 
the projection functor $\calA\to \calA/\mathcal{N}$ lifts idempotents and 
is conservative (hence ``there are no phantom objects'').
In general $\tens$-categories, Schur-finiteness
is not sufficient to get the nilpotency of $\mathcal{N}(A)$;
see \cite[10.1.1]{AK} for an example of a phantom non-zero Schur-finite 
object, i.e., whose identity is universally of trace zero.

In the special case of Chow motives, if $M(X)$
is the motive of a smooth variety $X$,
then $\mathcal{N}(M(X))$ is the set of the numerically 
trivial correspondences $CH^{\dim X}(X\times X)_{num}$,
and the Andr\'e-Kahn result generalizes a previous result
by Kimura (\cite[7.5]{KiFD}) who
also conjectured in \emph{loc.~cit.} that
all Chow motives are finite-dimensional, and hence
all $\mathcal{N}(M(X))$ are nilpotent.
Moreover, the conjectures of Bloch-Beilinson-Murre (together with the ``numerical=homological'' standard conjecture) imply
the nilpotency of all endomorphism algebras
 and this implies the finite-dimensionality of each object. 
In order to extend the Andr\'e-Kahn result to a larger subclass 
of Schur-finite objects, we needed to find
a peculiar feature which forces the nilpotency and is expected to be true
in the category of motives. We will show that the
sign property is such a feature.

From now on, let $\calA$ be a category of \textbf{homological type}
(see \cite[4]{KaMM}), i.e., a category
with a $\tens$-functor to super vector
spaces $H:\calA\to sVect$ which we will call ``cohomology'' by abuse of notation.

\begin{defn}\label{sign}
We say that an object $A$ in a category of homological type
has the {\bfseries sign property} if 
the projections on the even and the odd part of the cohomology 
$H(A)=H(A)_0\oplus H(A)_1$ lift to endomorphisms in $\calA$ 
(cf. \cite[4.8]{KaMM}).
\end{defn}

In particular, the category of motives is of homological type
and the sign property is known as the sign conjecture
(\cite[p. 426]{JaFDMMC}) which is a part of the conjecture on 
the algebraicity of the Chow-K\"unneth decomposition of the diagonal.
The main difference is that we do not require the lifts to be
idempotents or orthogonal, although they are so in cohomology.

The next theorem is our main result: its proof
relies on a combinatorial result which will be
proved in \S\ref{comb}.
The rest of this section will be
dedicated to some related remarks some of which suggest 
that this might the best generalization possible of this line of proof.

\begin{thm}\label{ppp}
Suppose $A$ has the sign property and 
let $H(A)$ be of dimension $d_0|d_1$.
Let $S_\lambda(A)=0$ for a partition $\lambda$ of $n\geq 2$ such that 
$(d_1+2)^{(d_0+2)}\nsubseteq \lambda$ and let $s$ be the length
of the biggest hook in $\lambda$. Then for any $f\in \mathcal{N}(A)$ we have $f^{\circ (s-1)}=0$.
\end{thm}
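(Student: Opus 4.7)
The plan is to follow the Kimura--Andr\'e--Kahn template: use the vanishing $S_\lambda(A)=0$, together with the sign property, to produce a universal polynomial identity in $\End_\calA(A)$ that expresses $f^{\circ(s-1)}$ as a $\Q$-linear combination of terms each containing a factor of the form $\Tr(f^{\circ j}\circ g)$ with $j\geq 1$ and $g\in\End_\calA(A)$. Once such an identity is in hand, $f\in\mathcal{N}(A)$ makes every trace factor vanish by the very definition of $\mathcal{N}$, and the conclusion $f^{\circ(s-1)}=0$ is immediate.

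To derive such an identity, start from the fact that $S_\lambda(A)=0$ means $\mathsf{d}_\lambda=0$ as an endomorphism of $A^{\otimes n}$. For any $\tau\in\Sim_n$ and any $h_1,\dots,h_n\in\End_\calA(A)$, one therefore has
\[
\Tr\bigl(\tau\circ\mathsf{d}_\lambda\circ(h_1\otimes\cdots\otimes h_n)\bigr)=0.
\]
Expanding $\mathsf{d}_\lambda$ in $\Q\Sim_n$ and applying the ``cycle'' formula for the categorical trace in a rigid symmetric monoidal category, each such relation unfolds into a signed product of traces of iterated compositions of the $h_i$'s, indexed by the cycles of the permutation. By specializing some of the $h_i$'s to $f$ and distributing the lifts $p_0,p_1$ of the parity projectors (provided by the sign property) among the remaining slots, one obtains a family of trace relations for $f$ which, by Berele--Regev's super Schur--Weyl duality, is governed by the hook Schur function $HS_\lambda$ evaluated at the ``eigenvalues'' of $f$ separated by parity. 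The sign property is crucial here: without lifts of the parity projectors inside $\calA$ itself, one cannot promote the ordinary-Schur trace identities to the super-refined ones needed for objects that are only Schur-finite rather than even-plus-odd.

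The main technical obstacle, deferred to \S\ref{comb}, is purely combinatorial: under the constraint $(d_1+2)^{(d_0+2)}\nsubseteq\lambda$, the above family of trace relations can be combined --- using the factorization formula for $HS_\lambda$ together with a Newton--Girard--type inversion --- into a single identity of the form
\[
f^{\circ(s-1)}=\sum_k c_k\,\Tr(f^{\circ j_k}\circ g_k)\cdot e_k
\]
with $c_k\in\Q$, $j_k\geq 1$, and $g_k,e_k\in\End_\calA(A)$. The length $s$ of the biggest hook of $\lambda$ controls the smallest power of $f$ for which this inversion can be performed without invoking a missing hook Schur value, and the rectangle hypothesis on $\lambda$ is precisely what prevents such obstructions from occurring. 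Producing this identity is the heart of the paper; granting it, the theorem follows as explained in the first paragraph.
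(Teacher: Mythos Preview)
Your ingredients are right --- trace identities from $\mathsf{d}_\lambda=0$, the sign property, and Berele--Regev --- but the architecture is off, and the way you describe the combinatorial step conflates the roles of $f$ and of the parity projectors. The paper does \emph{not} mix $f$ with the projectors to build a Cayley--Hamilton-type identity via ``Newton--Girard inversion.'' Instead it quotes the proof of \cite[Theorem~2.1]{DeMaSFN}: setting $\delta:=\lambda\setminus\nu$ (remove the maximal hook, so $|\delta|=r=n-s$), there is a scalar-valued function
\[
y(\delta;f_1,\dots,f_r)=\Tr\bigl(\mathsf{d}_\delta\circ(f_1\otimes\cdots\otimes f_r)\bigr)
\]
on $\End_\calA(A)^r$ with the property that if $y(\delta;-)$ is \emph{not identically zero}, then $f^{\circ(s-1)}=0$ for every $f\in\mathcal{N}(A)$. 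So the theorem is reduced to exhibiting some tuple at which $y$ does not vanish; $f$ itself plays no role in this step.

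This is where the sign property enters, and only here. One plugs in $f_1=\cdots=f_r=g:=\alpha_0\pi_0+\alpha_1\pi_1$. The trace identities $\Tr(\pi_i^l)=(-1)^id_i$ and $\Tr(\pi_{j_1}\cdots\pi_{j_k})=0$ for non-constant $j$ turn $y(\delta;g)$ into an explicit polynomial $P(\delta;\alpha_0,\alpha_1;d_0,-d_1)\in\Q[\alpha_0,\alpha_1]$. The combinatorial section then identifies $P$ (up to a nonzero constant) with the hook Schur function $\mathsf{HS}_\delta$ evaluated at $(\alpha_0,\dots,\alpha_0;-\alpha_1,\dots,-\alpha_1)$ --- not $\mathsf{HS}_\lambda$, and not at ``eigenvalues of $f$'' --- and uses the Berele--Regev factorization to show this is a nonzero polynomial precisely when $(d_0,d_1)$ lies in the maximal skew hook of $\delta$. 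Finally, $S_\lambda(A)=0$ forces $S_\lambda(H(A))=0$, hence $(d_0+1,d_1+1)\in\lambda$, so $(d_0,d_1)\in\delta$; and the hypothesis $(d_1+2)^{d_0+2}\nsubseteq\lambda$ gives $(d_0+1,d_1+1)\notin\delta$, placing $(d_0,d_1)$ exactly on the skew hook. Your sketch misses this clean two-step separation and misidentifies both the partition ($\delta$, not $\lambda$) and the variables on which the hook Schur function depends.
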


\begin{proof}
Let $\nu$ be the maximal hook $\nu$ of $\lambda$, $r:=n-s$ and $\delta:=\lambda
\setminus \nu$. A key role is played by the following function:
for any $f_1,\ldots,f_r\in \End(A)$, let
\begin{multline}\label{defy}
y(\delta; f_1,\dots, f_r):=\Tr(\mathsf{d}_\delta\circ f_1\otimes\cdots\otimes f_r)\\
=\frac{\dim V_\delta}{|\delta|!}\sum_{\sigma\in \Sigma_r}
\chi_\delta(\sigma)\prod_{j=1}^q\Tr(f_{{\gamma_j}^{l_j-1}(k_j)}\circ \cdots \circ
f_{\gamma_j(k_j)}\circ f_{k_j}),
\end{multline}
where $\gamma_1\circ\cdots\circ\gamma_{q_\sigma}$ is the cycle decomposition of $\sigma$,
$k_j$ is any element in the support of $\gamma_j$,
and $l_j$ is the length of the cycle $\gamma_j$.
Our proof of \cite[Theorem 2.1]{DeMaSFN} shows that if $y(\delta;-,\ldots,-)$ is not the zero function on $\End(A)^r$, then $f^{\circ(s-1)}=0$
for each $f\in \c{N}(A)$.

Since $A$ has the sign property then there exist two endomorphisms $\pi_0$ and $\pi_1$
such that $t_i:=\Tr(H(\pi_i))=(-1)^id_i$ ($i=0,1$), and
we have the following trace identities:
\begin{enumerate}
\item[\textbf{(TI1)}] $t_i= \Tr(\pi_i) = \Tr(\pi_i^l)\in \ZZ$ for all  $l>0$, and
\item[\textbf{(TI2)}] $\Tr(\pi_{j_1}\circ\cdots \circ \pi_{j_k}) = 0$ for any 
$k>1$ and any non-constant map $j\colon\{1,...,k\}\To\{0,1\}$.
\end{enumerate}  

Let us choose
$f_1=\dots=f_r=g:=\alpha_0\pi_0+\alpha_1\pi_1$, then
it is an immediate consequence of the properties
$(\mathbf{TI1})$ and $(\mathbf{TI2})$ that
$y$ becomes a polynomial in $\alpha_0,\alpha_1,t_0,t_1$
\begin{equation*}
y(\delta;g):=
y(\delta;g,\dots,g)=
\frac{\dim V_\delta}{|\delta|!}\sum_{\sigma\in \Sigma_{|\delta|}}
\chi_\delta(\sigma)\prod_{j=1}^q({\alpha_0}^{l_j}t_0+{\alpha_1}^{l_j}t_1)
\end{equation*}
Since $H$ is a tensor functor, $S_\lambda(H(A))=H(S_\lambda(A))$.
But $S_\lambda(H(A))=0$ if and only if $(d_0+1,d_1+1)\in \lambda$
(see \cite[1.9]{DeCT02}) 
and therefore $S_\lambda(A)=0$ implies that $(d_0+1,d_1+1)
\in \lambda$, and, in particular, $(d_0,d_1)\in\delta$.
But by hypothesis, we have that
$(d_0+2,d_1+2)\not\in\lambda$ and then 
$(d_0+1,d_1+1)\not\in\delta$.
So $(d_0,d_1)$ is in the maximal skew hook of $\delta$.

From the results in \S\ref{comb}, it follows
that $y(\delta;g)$ is the polynomial 
$P(\delta;{\alpha_0},{\alpha_1};t_0,t_1)$
in $\Q[\alpha_0,\alpha_1,t_0,t_1]$ which,
when computed for $t_0=d_0$ and $t_1=-d_1$, is a non-zero
polynomial in $\alpha_0$ and $\alpha_1$.
Since the coefficients of this polynomial are a field of characteristic
zero, this proves the theorem.
\end{proof}



\begin{rem}\label{dnd}
Proving that the function 
$y(\lambda\setminus\nu;-,\ldots,-)$ (defined in equation
\ref{defy}) is not zero 
is a combinatorial
problem because it
does not depend on the choice of the category. In particular,
it can be calculated on a super-vector space.
\end{rem}

\begin{rem}
If $S_\lambda(A)=0$ and its cohomology is of super dimension
$d_0|d_1$, then $\lambda\supseteq ((d_1+1)^{d_0+1})$, and there
exist $f_1,\ldots,f_r$ such that 
$y(\lambda\setminus \nu; f_1,\dots, f_r)\not=0$ only if
$\lambda\nsupseteq ((d_1+2)^{d_0+2})$.

The first assertion is in the proof of \ref{ppp}, while 
the second one follows from 
the universal relations among super traces
characterized by Razmyslov. A suitable formulation 
of his result is in Berele's \cite[3.1]{BeTI}
where it is shown that if $(d_0+2,d_1+2)\in\lambda$,
and so $(d_0+1,d_1+1)\in\delta:=\lambda\setminus \nu$, 
then
${\sf d}_\delta\in\QQ\Sigma_{|\delta|}$ belongs to the two sided ideal
of $\QQ\Sigma_{|\delta|}$ of those elements whose associated ``trace polynomial''
(in our notation, just a multiple of $y(\delta;-,\ldots,-)$ 
by a non-zero constant)
is an identity of super matrices with even dimension $d_0$ and odd dimension
$d_1$.
That is, if $\lambda\supseteq ((d_1+2)^{d_0+2})$, then
$y(\lambda\setminus \nu; f_1,\dots, f_r)=0$ for any choice of $f_1,\ldots,f_r$. 
For more details, see the given reference, or \cite{DeSFSDD}, or \cite[2.5]{DePhd}.
\end{rem}

\begin{rem}\label{RemarkP}
Note that in categories of homological type, and in particular for motives,
there is really nothing lost in considering only $\End_{\c{A}}(A)^r$ instead
of trying to exploit $\End_{\c{A}}(A^{\otimes r})$, for
$\End(H(A^{\otimes r}))\cong\End(H(A))^{\otimes r}$ as vector spaces.
\end{rem}

\begin{rem}
(P. Deligne) By \ref{dnd},
it suffices to do the calculations on a super vector space $V$.
Letting $\pi_0$ and $\pi_1$ be the projections, $\pi_0-\pi_1$ is $(-1)^i$ in degree $i$ and so
\[ \Tr(\mathsf{d}_\delta\circ (\pi_0-\pi_1)^{\tens r})=
\dim(Im(\mathsf{d}_\delta)^+)+\dim(Im(\mathsf{d}_\delta)^-)\]
which is non-zero if and only if $S_\delta(V)\not=0$. This is a sufficient condition
for the non-vanishing of $y(\delta;\pi_0-\pi_1)$.
\end{rem}

\begin{rem}
By \ref{ppp}, we have that $f^{\circ(s-1)}=0$ for all $f\in \mathcal{N}(A)$.
Using 
Razmislov's improvement of the (Dubnov-Ivanov) Nagata-Higman Theorem
(see \cite{FoNHT} and \cite[11.8.10]{PrLG}) we have that
$\c{N}(A)^{(s-1)^2}=0$. Thus
for finite-dimensional objects $A=A_+\oplus A_-$ 
the known nilpotency bounds (\cite[3.4]{AnMDF}) can be improved:
indeed such an object is (minimally) killed by the rectangle $\lambda=((\kim(A_-)+1)^{\kim(A_+)+1})$,
hence $s-1=\kim(A)$ (\cite[2.4.10]{DePhd} or \cite{DeSFSDD}).
In the setting of categories of homological type 
a different bound is given in \cite[4.10 b)]{KaMM} (see also \cite[4.11]{KaMM}).
However, for any (twist of direct summand of a) pure motive $\mathfrak{h}(X)$,
the global nilpotency bound for 
$\c{N}({\mathfrak{h}(X)})$ should be $\dim(X)+1$ by Bloch-Beilinson-Murre's
conjectures (\cite[Conjecture 2.1 (strong e)]{JaMS}).
As a partial evidence, Morihiko Saito proved in \cite{SaBCCM} that for smooth complex surfaces $S$ with $p_g=0$,
Bloch's conjecture (\cite[Conjecture 1.8]{JaMS}) is equivalent to $(CH^2(S\times S)_{hom})^3=0$.
\end{rem}

\section{The combinatorial result}\label{comb}


We now state and prove the main combinatorial result. As a corollary,
we will get the non-vanishing we needed. We are interested in studying
the following polynomial.

\begin{defn}\label{defp}
Let $\delta$ be a partition of $r$, we define:  
  \begin{equation*}
P(\delta;{\alpha_0},{\alpha_1};t_0,t_1):=
\frac{\dim V_\delta}{r!}\sum_{\sigma\in \Sigma_{r}}
\chi_\delta(\sigma)\prod_{j=1}^q({\alpha_0}^{l_j}t_0+{\alpha_1}^{l_j}t_1)
  \end{equation*}
as a polynomial in $\QQ[{\alpha_0},{\alpha_1},t_0,t_1]$,
where the $l_j$ are the lengths of
the cycles in the cycle decomposition of $\sigma$.
\end{defn}

Notice that $P(\delta;1,0;t_0,t_1)$ is the content
polynomial of $\delta$ as an element of $\Q[t_0]$.




\begin{prop}\label{prp}
Let $\delta$ be any partition, and let
$P(\delta;{\alpha_0},{\alpha_1};t_0,t_1)$ be
the polynomial defined in \ref{defp}.
Then for any $(d_0,d_1)$ in the maximal skew hook of $\delta$ we have
the following identity of polynomials in $\alpha_0$ and $\alpha_1$:
\begin{multline*}
 P(\delta;{\alpha_0},{\alpha_1};d_0,-d_1)\\
=(\dim V_\delta)(-1)^{|\nu|}\frac{\dim V_\mu} {|\mu|!}\frac{\dim V_\nu}{|\nu|!}(\alpha_0-\alpha_1)^{d_0d_1}{\alpha_0}^{|\mu|}{\alpha_1}^{|\nu|}\cp_\mu(d_0)\cp_\nu(d_1),
\end{multline*}
where $\mu$ is the partition whose non-zero parts are the positive $\delta_i-d_1$, and
$\nu$ is the partition whose non-zero parts are the positive 
$\delta_i^\prime-d_0$ (see \cite[6.14]{BeRe}).
\end{prop}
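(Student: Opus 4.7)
The plan is to interpret $P(\delta;\alpha_0,\alpha_1;d_0,-d_1)$ as the categorical trace of a suitable endomorphism on a model super vector space and then invoke Berele-Regev's factorization formula for hook Schur polynomials. Let $V = V_0 \oplus V_1$ have super-dimension $d_0|d_1$, write $\pi_0,\pi_1$ for the parity projectors, and put $g := \alpha_0 \pi_0 + \alpha_1 \pi_1$. The standard fact that the categorical trace of $\sigma \circ f^{\otimes r}$ factors over the cycles of $\sigma$ gives, with $l_1,\dots,l_q$ the cycle lengths of $\sigma$,
\[
\Tr(\sigma \circ g^{\otimes r}) = \prod_{j=1}^q \Tr(g^{l_j}) = \prod_{j=1}^q \bigl(d_0\alpha_0^{l_j} - d_1\alpha_1^{l_j}\bigr),
\]
and plugging this into the definition of $P$ yields the key identification
\[
P(\delta;\alpha_0,\alpha_1;d_0,-d_1) = \Tr\bigl(\mathsf{d}_\delta \circ g^{\otimes r}\bigr).
\]

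Next I would apply the super version of Schur-Weyl duality proved by Berele-Regev. Because $(d_0+1,d_1+1) \notin \delta$, the partition $\delta$ fits inside the $(d_0|d_1)$-hook, so $\mathsf{d}_\delta$ projects $V^{\otimes r}$ onto the isotypic summand $V_\delta \otimes HS_\delta(V)$ and its super trace becomes
\[
\Tr\bigl(\mathsf{d}_\delta \circ g^{\otimes r}\bigr) = (\dim V_\delta)\cdot HS_\delta\bigl(\alpha_0^{d_0}\,\big|\,(-\alpha_1)^{d_1}\bigr),
\]
where the odd alphabet is negated in order to reconcile the super-trace relation $\Tr(g^n) = d_0\alpha_0^n - d_1\alpha_1^n$ with Berele-Regev's convention for $HS_\lambda$ (whose generating series uses $\prod(1+y_jt)/\prod(1-x_it)$).

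Finally, the hypothesis $(d_0,d_1) \in \delta$ says precisely that the rectangle $(d_1^{d_0})$ is contained in $\delta$, so Berele-Regev's factorization formula \cite[6.14]{BeRe} applies in the form
\[
HS_\delta(x_1,\dots,x_{d_0}\,|\,y_1,\dots,y_{d_1}) = \prod_{i,j}(x_i + y_j)\cdot s_\mu(x_1,\dots,x_{d_0})\cdot s_\nu(y_1,\dots,y_{d_1}),
\]
with $\mu$ and $\nu$ as in the statement; here $\nu$ is the conjugate of the ``below the rectangle'' piece of $\delta$, which is exactly why its parts are recovered from $\delta'_i - d_0$. Specializing to $x_i = \alpha_0$ and $y_j = -\alpha_1$ produces the prefactor $(\alpha_0-\alpha_1)^{d_0 d_1}$ together with $s_\mu(\alpha_0^{d_0})$ and $(-1)^{|\nu|}s_\nu(\alpha_1^{d_1})$, and applying the hook length formula $s_\lambda(1^n) = \cp_\lambda(n)\dim(V_\lambda)/|\lambda|!$ to each of these two Schur evaluations rewrites them as $\alpha_0^{|\mu|}\cp_\mu(d_0)\dim(V_\mu)/|\mu|!$ and $\alpha_1^{|\nu|}\cp_\nu(d_1)\dim(V_\nu)/|\nu|!$. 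Assembling all the pieces produces the formula in the statement.

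The work is essentially bookkeeping: the one delicate point is keeping the two sign conventions straight (categorical super-trace versus Berele-Regev's hook Schur polynomial) and recognizing that the ``below the rectangle'' piece of $\delta$ enters their factorization through its conjugate $\nu$, so that both the factor $(\alpha_0-\alpha_1)^{d_0d_1}$ and the sign $(-1)^{|\nu|}$ drop out cleanly from the substitution $y_j \mapsto -\alpha_1$.
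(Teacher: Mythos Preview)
Your proposal is correct and follows essentially the same route as the paper's proof: interpret $P(\delta;\alpha_0,\alpha_1;d_0,-d_1)$ as $\Tr(\mathsf{d}_\delta\circ g^{\otimes r})$ on the model super vector space, invoke Berele--Regev to rewrite this as $(\dim V_\delta)$ times a hook Schur polynomial, apply their factorization formula, and then evaluate the resulting ordinary Schur functions via the content polynomial. The only cosmetic difference is that the paper handles the sign by passing from the categorical super trace $\Tr$ to the ordinary trace $tr$ via $\widetilde{g}=\alpha_0\pi_0-\alpha_1\pi_1$, whereas you keep $g$ and absorb the sign into the odd alphabet of $HS_\delta$; the two bookkeeping devices are equivalent.
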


\begin{proof}
Let $\c{A}$ be the $\QQ$-linear
category of super vector spaces and let $A$ be 
$\QQ^{d_0}|\QQ^{d_1}$.
By Remark \ref{dnd},
we can describe the polynomial $P(\delta;{\alpha_0},{\alpha_1};d_0,-d_1)$
in terms of the (super) trace of $g\in \End_{\c{A}}(A)$ given by $\alpha_0\pi_0+\alpha_1\pi_1$,
where $\pi_i\in\End_{\c{A}}(A)$ is the projection on the degree $i$ part
(if $\alpha_0\alpha_1\neq 0$ then $g\in GL(d_0|d_1)$). More precisely,
if $e_\delta$ is any (idempotent) Young symmetrizer  associated  to $\delta$,
then $\overline{P}=\frac{1}{\dim V_\delta}P(\delta;{\alpha_0},{\alpha_1};d_0,-d_1)$
is the evaluation on $\widetilde{g}:=\alpha_0\pi_0-\alpha_1\pi_1$ of the character $\chi_\delta$
of the representation $e_\delta^A A^{\otimes |\delta|}$ of
$GL(d_0|d_1)$.
Indeed, letting ${\sf s}_\vartheta$ be the usual Schur function of a partition $\vartheta$ we have:
\begin{eqnarray*}
\overline{P}
(\delta;{\alpha_0},{\alpha_1};d_0,-d_1)
&=&\frac{1}{\dim V_\delta}\Tr(\mathsf{d}_\delta\circ g^{\otimes |\delta|})\\
&=&\Tr(e_\delta\circ g^{\otimes |\delta|})\\
&=&tr(e_\delta\circ \widetilde{g}^{\otimes |\delta|})\\
&=&tr(e_\delta\circ (\alpha_0\pi_0-\alpha_1\pi_1)^{\otimes |\delta|})\\
&\overset{(a)}{=}&
{\sf HS}_\delta(\underbrace{\alpha_0,\dots,\alpha_0}_{d_0},\underbrace{-\alpha_1,\cdots,-\alpha_1}_{d_1})\\
&\overset{(b)}{=}&
\left(\prod_{1}^{d_0}\prod_{1}^{d_1}(\alpha_0-\alpha_1)\right)
{\sf s}_\mu(\underbrace{\alpha_0,\dots,\alpha_0}_{d_0}){\sf s}_\nu(\underbrace{-\alpha_1,\dots,-\alpha_1}_{d_1})\\
&=&(-1)^{|\nu|}(\alpha_0-\alpha_1)^{d_0d_1}{\alpha_0}^{|\mu|}{\alpha_1}^{|\nu|} 
{\sf s}_\mu(\underbrace{1,\dots,1}_{d_0}){\sf s}_\nu(\underbrace{1,\dots,1}_{d_1}),
 \\
&\overset{(c)}{=}&(-1)^{|\nu|}\frac{\dim V_\mu} {|\mu|!}\frac{\dim V_\nu}{|\nu|!}(\alpha_0-\alpha_1)^{d_0d_1}{\alpha_0}^{|\mu|}{\alpha_1}^{|\nu|}\cp_\mu(d_0)\cp_\nu(d_1),
\end{eqnarray*}
where $\mu$ is the partition whose non-zero parts are the positive $\delta_i-d_1$, and
$\nu$ is the partition whose non-zero parts are the positive 
$\delta_i^\prime-d_0$ (see \cite[6.14]{BeRe}).

The hyphotesis $(d_0+1,d_1+1)\not\in\delta$ allows us to exploit
\cite[6.10 (b)]{BeRe} in $(a)$, and \cite[6.20]{BeRe} in $(b)$. 
Notice that the $\Sigma_{|\delta|}$ action defined in \cite[1]{BeRe}
coincides with the one induced by the (Koszul) commutativity constraint of $\c{A}$.
The equality $(c)$ is a well-known fact (see
\cite[I.3, Example 5 and the proof of I.7(7.6)]{Mac95}).
\end{proof}

\begin{cor}\label{cr}
Let $P$ be the polynomial defined in \ref{defp}. If $(d_0,d_1)$ 
is in the maximal skew hook of $\delta$,
then $P(\delta;\alpha_0,\alpha_1;d_0,-d_1)$
is a non-zero polynomial in $\alpha_0$
and $\alpha_1$.

\end{cor}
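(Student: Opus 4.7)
The plan is to invoke the explicit product formula for $P(\delta;\alpha_0,\alpha_1;d_0,-d_1)$ established in Proposition \ref{prp} and show that every factor is a non-zero element of $\Q[\alpha_0,\alpha_1]$. Writing
\[
P(\delta;\alpha_0,\alpha_1;d_0,-d_1) = (\dim V_\delta)(-1)^{|\nu|}\frac{\dim V_\mu}{|\mu|!}\frac{\dim V_\nu}{|\nu|!}(\alpha_0-\alpha_1)^{d_0d_1}\alpha_0^{|\mu|}\alpha_1^{|\nu|}\cp_\mu(d_0)\cp_\nu(d_1),
\]
I observe immediately that the combinatorial factors $\dim V_\delta$, $\dim V_\mu$, $\dim V_\nu$ are positive integers (dimensions of irreducible $\Sigma_n$-representations), the sign $(-1)^{|\nu|}$ is non-zero, and the polynomial part $(\alpha_0-\alpha_1)^{d_0d_1}\alpha_0^{|\mu|}\alpha_1^{|\nu|}$ is a non-zero element of $\Q[\alpha_0,\alpha_1]$.

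The only remaining factors that require argument are the content polynomials $\cp_\mu(d_0)$ and $\cp_\nu(d_1)$, which as usual take the form $\prod_{(i,j)\in\mu}(d_0+j-i)$ and $\prod_{(i,j)\in\nu}(d_1+j-i)$. The key point is that these products of integers are non-zero exactly when no box has content $-d_0$ (resp.\ $-d_1$); equivalently when $\mu$ has at most $d_0$ rows, and $\nu$ at most $d_1$ rows. This is where the skew-hook hypothesis enters. Unwinding the definition, $(d_0,d_1)$ lying in the maximal skew hook of $\delta$ means $(d_0+1,d_1+1)\notin\delta$, i.e.\ $\delta_{d_0+1}\leq d_1$ and dually $\delta'_{d_1+1}\leq d_0$. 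The first inequality forces $\delta_i-d_1\leq 0$ for $i\geq d_0+1$, so $\mu$ has at most $d_0$ rows; the second, by transposing, forces $\nu$ to have at most $d_1$ rows.

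The main (and essentially only) obstacle is the bookkeeping to check that the skew-hook condition translates into the row-length bounds on $\mu$ and $\nu$; once this is done, the positivity $\cp_\mu(d_0)>0$ and $\cp_\nu(d_1)>0$ follows because every box $(i,j)$ of $\mu$ satisfies $i\leq d_0$, hence $d_0+j-i\geq j\geq 1$, and similarly for $\nu$. Combining, every factor on the right-hand side is non-zero in $\Q[\alpha_0,\alpha_1]$, and since this ring is a domain, the product is non-zero, completing the proof.
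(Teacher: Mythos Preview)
Your proof is correct and follows essentially the same approach as the paper: invoke the product formula from Proposition~\ref{prp}, then use the skew-hook condition $(d_0+1,d_1+1)\notin\delta$ to bound the number of rows of $\mu$ (and, by transposition, of $\nu$) so that the content polynomials $\cp_\mu(d_0)$ and $\cp_\nu(d_1)$ are non-zero. The paper's version is terser (it leaves implicit why the remaining factors are non-zero and why the row bound forces $\cp_\mu(d_0)\neq 0$), but the argument is the same.
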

\begin{proof}
By Theorem \ref{prp}, we just need to show that 
$\textsf{cp}_\mu(d_0)\not=0$ and $\textsf{cp}_\nu(d_1)\not=0$.
Let us show the first one, since the second one comes from passing
to the transpose partition. 
If $(d_0,d_1)$ is in the maximal skew hook of $\delta$, then
$\delta_{d_0+1}\leq d_1$,
so $\mu$ has at most $d_0$ rows 
which in turn implies that $\textsf{cp}_\mu(d_0)\not=0$. 
\end{proof}

Shortly after the proof of the Theorem \ref{prp} was completed, 
Christine Bessenrodt informed us that Richard Stanley had suggested
a slightly different solution: it follows 
from the theory of super Schur functions, for our polynomial $P$ can be obtained
from the power-sum expansion of the Schur function ${\sf s}_\delta$
by a suitable substitution. Then, one can apply Problem $50(g)$ 
in the Supplementary Problems for Chapter $7$ of
Stanley's book Enumerative Combinatorics II (on his EC web page),
which is based on \cite{BeRe}.
We decided to maintain our approach for it is closer
to our original ``motivic'' point of view.

\section*{Acknowledgments}
We thank Christine Bessenrodt for her suggestions and interest in the problem,
and Bruno Kahn for supplying the most recent version of his work.
The first author wishes also to express his gratitude to Uwe Jannsen for his constant support and 
lots of useful conversations during a postdoc stay at the math department of the University of Regensburg.

\bibliographystyle{mrl}
\bibliography{crpaper2}

\end{document}